\documentclass[11pt]{article}

\usepackage[utf8]{inputenc}
\usepackage{amsmath,amssymb,amsthm}
\usepackage{mathrsfs}
\usepackage{hyperref}

\theoremstyle{definition}
\newtheorem{defn}{Definition}[section]

\newtheorem{remark}[defn]{Remark}

\theoremstyle{plain}
\newtheorem{prop}[defn]{Proposition}
\newtheorem{theorem}[defn]{Theorem}
\newtheorem{lemma}[defn]{Lemma}

\newcommand{\II}{\mathbb{I}}
\newcommand{\PP}{\mathcal{P}}
\newcommand{\bE}{\mathbb{E}}

\title{DisCo Algebra: Why Disintegration and Combination Are Not Inverses}

\author{Xuexing Lu \\
School of Mathematics and Statistics\\ Zaozhuang University, China \\
\texttt{xxlu@uzz.edu.cn}}

\date{October 2026}

\begin{document}

\maketitle

\begin{abstract}
We present DisCo algebra---a compositional framework for joint probability
built from a symmetric cooperad (disintegration) and a
\emph{magma-operad} (non-associative partial combination, restricted
to the image of disintegration) on the bare family
$\mathcal{P}_n=\mathrm{Prob}([0,1]^n)$.
No probability integral transform, no external summary functions, no
enrichment of objects with conditional kernels.
The disintegration side uses block-wise joint CDFs as intrinsic,
measure-determined 1D summaries, yielding a lossy compression.
The combination side reads the same summaries and reconstructs via
regular conditional measures, yielding a lossless embedding on admissible
inputs.
We prove that combination followed by disintegration is the identity
on admissible inputs, but disintegration followed by combination is
not---it defines a \emph{renormalization operator}
$R_{\mathcal{I}}=\text{Combine}\circ\text{Disintegrate}$.
The information loss of this operator is exactly the
\emph{probability curvature} $\mathcal{K}_{\mathcal{I}}(\mu)$---the
conditional KL divergence between the true conditional distribution and
the conditionally independent product.
This curvature measures the obstruction to lossless decomposition and
reveals a hard limit on how much cross-block dependence can be captured
by CDF-based scalar block summaries.
\end{abstract}

\section{Introduction}

High-dimensional joint probability distributions are central to
statistics, machine learning, and theoretical physics.
A natural question is: can we decompose a joint distribution into
simpler parts, and then reassemble it? If so, are decomposition and
reassembly inverses of each other?

We show that the answer is \textbf{no} in general.
The obstruction is quantified by a quantity we call
\emph{probability curvature} $\mathcal{K}_{\mathcal{I}}(\mu)$: the
conditional KL divergence between the true conditional distribution
and the conditionally independent product.
When curvature vanishes, disintegration and combination are inverses
on the nose; when it is nonzero, scalar block summaries cannot capture
the full cross-block dependence. This is a hardness result about
high-dimensional probability itself, not a defect of any particular
factorization approach.

Classical copula theory \cite{sklar1959} gives the finest factorization:
a joint distribution is recovered from one-dimensional marginals
plus a copula. But Sklar's theorem is an analytic statement---it does
not come with an algebraic structure that explains how decompositions
compose and how reconstructions plug together.

We introduce DisCo algebra (disintegration + combination) to fill this gap.
The framework operates on bare probability measures
$\mathcal{P}_n=\mathrm{Prob}([0,1]^n)$ with no external transformations,
no enriched objects, and no parameter families.
A block-wise joint CDF serves as an intrinsic, measure-determined
1D summary. Disintegration along these summaries (a standard tool from
measure theory; see \cite{kallenberg2002}) gives a cooperad
structure (lossy compression). Combination via regular conditional
measures gives a \emph{magma-operad} structure
(non-associative partial composition on admissible inputs).
The two directions are not inverses---a fact that defines a
renormalization operator with quantifiable information loss.

\paragraph{Magma-operad structure.}
The combination map $\gamma$ is not a globally defined operad on all
tuples $(\Phi_k;\mu_1,\dots,\mu_k)$.  It is a \emph{magma-operad}: a
symmetric partial composition (a magma-valued operation, with no
associativity or unit axioms imposed). Its domain is the
\emph{admissible subclass} defined by the compatibility condition
$(\pi_j)_*\Phi_k = \Psi_{j,*}\mu_j$ for all $j=1,\dots,k$. This
condition is automatically satisfied by every element in
$\mathrm{im}(\Delta_{\mathcal{I}})$, so $\gamma$ is well-defined on
disintegration outputs. The admissible class may be strictly larger
than $\mathrm{im}(\Delta_{\mathcal{I}})$, but we restrict attention
to $\mathrm{im}(\Delta_{\mathcal{I}})$ as the natural domain, ensuring
that conditional kernels are $\Phi_k$-a.e.\ well-defined and
composition is well-defined on equivalence classes.  On admissible
inputs, $\gamma\circ\Delta_{\mathcal{I}}$ defines a renormalization
operator $R_{\mathcal{I}}$ whose information loss is exactly the
\emph{probability curvature}
$\mathcal{K}_{\mathcal{I}}(\mu)\ge 0$, vanishing if and only if
$\mu\in\mathrm{im}(\gamma)$.

\section{Objects: bare probability measures}

Let $\II=[0,1]$. Define
\[
\PP_n := \mathrm{Prob}(\II^n)
\]
as the set of all Borel probability measures on $\II^n$.
No assumption on marginal uniformity, absolute continuity,
or any external transformation. The only structure beyond a general
finite measure is normalization to total mass 1.

All spaces considered ($\II^n$, $\II^k$) are standard Borel spaces
(Polish spaces equipped with their Borel $\sigma$-algebras), which
ensures the existence and uniqueness of regular conditional measures
via the disintegration theorem (see Section~\ref{sec:disint}).

\section{Coordinate partitions and block projections}

Fix a partition $\mathcal{I}=(I_1,\dots,I_k)$ of $\{1,\dots,n\}$ with
$\bigcup_{j=1}^k I_j=\{1,\dots,n\}$ and $I_j\cap I_{j'}=\emptyset$
for $j\neq j'$.
Let $m_j=|I_j|$ and $\sum_j m_j=n$.

The block projection merely reorders coordinates:
\[
\pi_{\mathcal{I}}:\II^n\longrightarrow\prod_{j=1}^k\II^{m_j},
\qquad
\pi_{\mathcal{I}}(u)=(u^{(1)},\dots,u^{(k)}).
\]
Pushforward gives the block-wise joint measure
$\mu_{\mathcal{I}}=(\pi_{\mathcal{I}})_*\mu$, and each block margin:
\[
\mu_j:=(\pi_{I_j})_*\mu\in\PP_{m_j}.
\]
At this stage the block-wise objects still live in
$\prod_j\II^{m_j}$ (total dimension still $n$),
so no genuine dimension reduction has occurred.
To reduce dimension we need a summary that is intrinsic to the measure.

\section{Block-wise CDF summaries: intrinsic, not external}

For block $I_j$, compress the $m_j$-dimensional vector to a scalar
using the block's own joint cumulative distribution function:
\[
\Psi_j(u^{(j)}):=
\mu_j\bigl([0,u^{(j)}_1]\times\cdots\times[0,u^{(j)}_{m_j}]\bigr)\in[0,1].
\]
Three properties fix its legitimacy:
\begin{itemize}
\item \emph{Canonical}: $\Psi_j$ depends only on the cumulative mass of
$\mu_j$; it does not read coordinate values, take arithmetic means,
or invoke any external function.
\item \emph{Probability semantics}: the fiber
$\Psi_j^{-1}(t)$ is the equiprobability surface ``cumulative probability
exactly $t$'', giving clean conditioning semantics.
\item \emph{Iterative compatibility}: refining sub-blocks then
aggregating CDFs = computing the coarse block CDF directly,
by measure additivity---no extra axiom needed.
\end{itemize}

Compose to get the dimension-reduction map:
\[
\Pi_{\mathcal{I}} = \Psi_1\times\cdots\times\Psi_k:\II^n\longrightarrow\II^k,
\qquad
\Phi_k := (\Pi_{\mathcal{I}})_*\mu\in\PP_k.
\]
$\Phi_k$ is an ordinary $k$-dimensional probability measure;
its 1D marginals are the distributions of the block CDF summaries
$\Psi_{j,*}\mu_j$, with no requirement of uniformity.

\begin{remark}[Compatibility]
\label{rem:compatibility}
For any $\mu\in\PP_n$, the pushforward $\Phi_k=(\Pi_{\mathcal{I}})_*\mu$ satisfies
\[
(\pi_j)_*\Phi_k = \Psi_{j,*}\mu_j
\]
for each $j=1,\dots,k$---the marginals of $\Phi_k$ coincide with the
block summary measures. This is automatic from construction and
ensures that $\Phi_k$-integration avoids the
$\Psi_{j,*}\mu_j$-null sets where conditional kernels are undefined.
It also implies that the support of $\Phi_k$ is compatible
with the $\Psi_{j,*}\mu_j$-full sets on which conditional kernels are
uniquely defined---this compatibility is a \emph{consequence} of the
marginal agreement, not an independent condition.
\end{remark}

\section{Disintegration along block-wise CDF summaries}
\label{sec:disint}

\begin{lemma}[Regular conditional measures along CDF summaries]
\label{lem:disint}
For every $\mu\in\PP_n$ and partition $\mathcal{I}$,
there exists a $\Phi_k$-a.e.\ unique probability kernel
$t\mapsto\mu(\cdot\mid t)$ supported on $\Pi_{\mathcal{I}}^{-1}(t)$ such that
\[
\int_{\II^n}f(u)\,d\mu(u)
=
\int_{\II^k}
\left(
\int_{\Pi_{\mathcal{I}}^{-1}(t)} f(u)\,d\mu(u\mid t)
\right)
d\Phi_k(t).
\]
\end{lemma}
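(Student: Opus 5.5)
This is the standard disintegration theorem applied to the Borel map $\Pi_{\mathcal{I}}:\II^n\to\II^k$, so the plan is to check its hypotheses and then read off the identity.

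\textbf{Measurability of $\Pi_{\mathcal{I}}$.} Each $\Psi_j$ is the joint CDF of $\mu_j$ evaluated at a point. A multivariate CDF is nondecreasing in each coordinate and right-continuous, so every sublevel set $\{\Psi_j\le s\}$ is a countable Boolean combination of products of intervals. Concretely, I would write
\[
\Psi_j(v)=\lim_{q\downarrow v,\ q\in\mathbb{Q}^{m_j}}\Psi_j(q),
\]
which exhibits $\Psi_j$ as a pointwise limit of countably many step functions and hence as Borel. Consequently $\Pi_{\mathcal{I}}=\Psi_1\times\cdots\times\Psi_k$ is Borel, since $\pi_{I_j}$ is a coordinate projection. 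I expect this to be the only step needing any care. Continuity fails when $\mu_j$ has atoms or singular parts, so the argument must rely on monotonicity together with right-continuity rather than on continuity.

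\textbf{Existence.} $\II^n$ and $\II^k$ are standard Borel, and $\Phi_k=(\Pi_{\mathcal{I}})_*\mu$. The disintegration theorem (e.g.\ \cite{kallenberg2002}, in its form for regular conditional distributions given a measurable map into a standard Borel space) then provides a probability kernel $t\mapsto\mu(\cdot\mid t)$ from $\II^k$ to $\II^n$ with
\[
\mu(A\cap\Pi_{\mathcal{I}}^{-1}(B))=\int_B\mu(A\mid t)\,d\Phi_k(t)
\]
for all Borel sets $A,B$. Taking $B=\II^k$ gives the displayed formula for indicators $f=\mathbf 1_A$. Linearity extends it to simple functions, and monotone convergence extends it to nonnegative Borel $f$. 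Splitting $f=f^+-f^-$ then handles every $\mu$-integrable or bounded Borel $f$; this is the sense in which I would read the statement.

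\textbf{Concentration on fibers.} I would show that $\mu(\Pi_{\mathcal{I}}^{-1}(t)\mid t)=1$ for $\Phi_k$-a.e.\ $t$. Let $\{B_i\}$ be a countable generating $\pi$-system for $\mathcal{B}(\II^k)$, for instance rational boxes. For each $i$, the identity above gives
\[
\mu(\Pi_{\mathcal{I}}^{-1}(B_i)\mid t)=\mathbf 1_{B_i}(t)\quad\text{for }\Phi_k\text{-a.e. }t.
\]
Intersecting these countably many full-measure sets yields a single full set $N^c$. For $t\in N^c$, the two probability measures $\mu(\Pi_{\mathcal{I}}^{-1}(\cdot)\mid t)$ and $\delta_t$ agree on the $\pi$-system, so by Dynkin's theorem they agree on all Borel sets. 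The singleton $\{t\}$ is closed, hence Borel, so $\mu(\Pi_{\mathcal{I}}^{-1}(t)\mid t)=1$. The inner integral may therefore be taken over the fiber, and I would redefine the kernel on the null set $N$, for example as a fixed point mass, without affecting anything.

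\textbf{Uniqueness.} Suppose $\nu(\cdot\mid t)$ is another such kernel. Apply the defining identity to $f=\mathbf 1_A\cdot(g\circ\Pi_{\mathcal{I}})$ with $g$ bounded Borel on $\II^k$; this identity for all such $f$ is the intended characterization, since the $f$-only identity alone does not determine the kernel. Fiber support gives
\[
\int \mu(A\mid t)\,g(t)\,d\Phi_k(t)=\int \nu(A\mid t)\,g(t)\,d\Phi_k(t)
\]
for all $g$, so $\mu(A\mid t)=\nu(A\mid t)$ for $\Phi_k$-a.e.\ $t$. Running $A$ over a countable generating $\pi$-system of $\mathcal{B}(\II^n)$ and applying Dynkin's theorem again shows the two kernels agree $\Phi_k$-a.e.
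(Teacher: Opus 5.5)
Your proposal is correct and takes the same route as the paper: the paper invokes the disintegration theorem on the standard Borel spaces $\II^n$ and $\II^k$, after simply asserting that $\Pi_{\mathcal{I}}$ is Borel. You go further and justify what the paper takes for granted. This covers Borel measurability of each $\Psi_j$ via right-continuity rather than continuity, concentration of the kernel on the fibers via a countable $\pi$-system and Dynkin's theorem, and $\Phi_k$-a.e.\ uniqueness, and all of these steps are sound.
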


\noindent
This is a direct application of the disintegration theorem for
Borel probability measures on standard Borel spaces
\cite{kallenberg2002}. The spaces $\II^n$ and $\II^k$ are standard
Borel, and $\Pi_{\mathcal{I}}$ is Borel measurable; hence the theorem applies
unconditionally. The fiber $\Pi_{\mathcal{I}}^{-1}(t)$ is characterized
block-wise by $\Psi_j(u^{(j)})=t_j$.

Independently, for each block $j$ the measure $\mu_j$ admits a
$\Psi_{j,*}\mu_j$-a.e.\ unique regular conditional measure
$u^{(j)}\mapsto\mu_j(\cdot\mid t_j)$ along its own CDF summary
$\Psi_j$. This gives the combination formula:
\[
\boxed{
\hat\mu
:=
\int_{\II^k}
\left(
\bigotimes_{j=1}^k\mu_j(\cdot\mid t_j)
\right)
d\Phi_k(t_1,\dots,t_k)
}
\tag{$\star$}
\]
When $\hat\mu=\mu$, we have
$\mu(\cdot\mid t)=\bigotimes_{j=1}^k\mu_j(\cdot\mid t_j)$
$\Phi_k$-a.e. In general the two conditional measures differ,
and the discrepancy is quantified by the probability curvature
(Definition~\ref{def:curvature}).

\begin{remark}[Nested null-set compatibility in iterated disintegration]
\label{rem:nested-null}
Let $\mu\in\PP_n$ and let $\mathcal{J}$ be a refinement of
$\mathcal{I}$. Both $\Delta_{\mathcal{I}}(\mu)$ and
$\Delta_{\mathcal{J}}(\mu)$ are obtained by successive applications
of the disintegration theorem (Lemma~\ref{lem:disint}).
The skeleton measure
$\Phi_k=(\Pi_{\mathcal{I}})_*\mu$ controls the first-level
conditional kernels $\mu(\cdot\mid t)$, which are $\Phi_k$-a.e.\
unique. For each $t$ in the $\Phi_k$-full set, the block measure
$\mu_r=(\pi_{I_r})_*\mu$ controls the second-level disintegration
via $\Pi_{\mathcal{J}^{(r)}}:\II^{m_r}\to\II^{k_r}$, yielding kernels
$\mu_r(\cdot\mid t')$ that are $\Phi_{k_r}$-a.e.\ unique where
$\Phi_{k_r}=(\Pi_{\mathcal{J}^{(r)}})_*\mu_r$.

Since $\Phi_k$ and each $\Phi_{k_r}$ are pushforwards of $\mu$,
their null sets are $\mu$-null:
if $N\subset\II^k$ satisfies $\Phi_k(N)=0$, then
$\mu(\Pi_{\mathcal{I}}^{-1}(N))=0$;
similarly for $N'\subset\II^{k_r}$.
Consequently, the set of points where either disintegration fails
to be uniquely defined has $\mu$-measure zero. Thus iterated
disintegration is $\mu$-a.e.\ uniquely determined, and the
coassociativity axiom for the cooperad structure holds as stated
in Definition~\ref{def:cooperad}.
\end{remark}

\section{Probability cooperad}
\label{sec:cooperad}

\begin{defn}[Probability cooperad]
\label{def:cooperad}
The family $\PP_\bullet=(\PP_n)_{n\ge1}$ with symmetric group action
$\sigma\cdot\mu=(\sigma^{-1})_*\mu$ is a symmetric cooperad via the
cocomposition map
\[
\boxed{
\Delta_{\mathcal{I}}:\PP_n
\longrightarrow
\Bigl(\bigotimes_{j=1}^k\PP_{m_j}\Bigr)\otimes\PP_k,
\qquad
\Delta_{\mathcal{I}}(\mu)
=
(\mu_1,\dots,\mu_k;\,\Phi_k)
}
\]
where $\mu_j=(\pi_{I_j})_*\mu$ and
$\Phi_k=(\Pi_{\mathcal{I}})_*\mu$.
\end{defn}

Coassociativity for a refinement $\mathcal{J}$ of $\mathcal{I}$:
\[
\bigl(\mathrm{id}^{\otimes(r-1)}\otimes
\Delta_{\mathcal{J}^{(r)}}\otimes
\mathrm{id}^{\otimes(k-r)}\bigr)\circ\Delta_{\mathcal{I}}
=
\Delta_{\mathcal{J}},
\]
follows from iterated disintegration uniqueness and the additive
compatibility of block CDF summaries. The detailed justification
of well-definedness across iteration steps is given in
Remark~\ref{rem:nested-null}.

The counit is $\varepsilon_n(\mu)=\mu$ for the finest partition
(all blocks of size 1), satisfying the counit law.

\section{Probability magma-operad}

\begin{defn}[Probability magma-operad]
\label{def:magma-operad}
The family $\PP_\bullet$ carries a \emph{magma-operad} structure:
a symmetric partial composition (a magma-valued operation, with no
associativity or unit axioms imposed).  The composition map
\[
\boxed{
\gamma\bigl(\Phi_k;\mu_1,\dots,\mu_k\bigr)
:=
\int_{\II^k}
\left(
\bigotimes_{j=1}^k\mu_j(\cdot\mid t_j)
\right)
d\Phi_k(t_1,\dots,t_k)
}
\tag{OP}
\]
is defined on the \emph{admissible subclass} where
$\Phi_k$ satisfies the compatibility condition
$(\pi_j)_*\Phi_k = \Psi_{j,*}\mu_j$ for all $j=1,\dots,k$.
This condition is automatically satisfied by every element in
$\mathrm{im}(\Delta_{\mathcal{I}})$, making $\gamma$ a well-defined
partial composition on disintegration outputs. The admissible class
may be strictly larger than $\mathrm{im}(\Delta_{\mathcal{I}})$, but
we restrict to $\mathrm{im}(\Delta_{\mathcal{I}})$ as the natural domain.

Conditional kernels are not stored in the objects;
they are invoked inside the definition of $\gamma$.

Well-definedness on equivalence classes: since admissible inputs
satisfy the marginal compatibility (Remark~\ref{rem:compatibility}),
$\Phi_k$-a.e.\ point $t=(t_1,\dots,t_k)$
has each $t_j$ in the $\Psi_{j,*}\mu_j$-full set where
$\mu_j(\cdot\mid t_j)$ is uniquely defined. Hence changing a
conditional kernel on a $\Psi_{j,*}\mu_j$-null set does not affect
the $\Phi_k$-integral---composition is well-defined on equivalence classes.

The composition $\gamma$ is a magma-valued partial operation; it is
not required to satisfy associativity or to admit a unit.  The
compatibility condition $(\pi_j)_*\Phi_k=\Psi_{j,*}\mu_j$ is
automatically satisfied on $\mathrm{im}(\Delta_{\mathcal{I}})$, which
is the natural domain of $\gamma$; see Remark~\ref{rem:magma} for a
discussion of well-definedness, selection dependence, and the status
of associativity.
\end{defn}

\begin{remark}[Why a magma-operad, and not an operad]
\label{rem:magma}
The composition $\gamma$ is defined on the admissible subclass
$\mathrm{im}(\Delta_{\mathcal{I}})\subset\PP_n$, where the compatibility
condition
\[
(\pi_j)_*\Phi_k = \Psi_{j,*}\mu_j,\qquad j=1,\dots,k,
\]
is automatically satisfied.  This condition is the key: it is
precisely the content of Remark~\ref{rem:compatibility} (marginal
agreement of the skeleton with the block summary measures), and it
holds for every output of $\Delta_{\mathcal{I}}$ by construction.
Consequently, for every $t=(t_1,\dots,t_k)$ in the
$\Phi_k$-full set, each $t_j$ lies in the
$\Psi_{j,*}\mu_j$-full set where the conditional kernel
$\mu_j(\cdot\mid t_j)$ is uniquely determined.  Changing a kernel on
a $\Psi_{j,*}\mu_j$-null set does not affect the $\Phi_k$-integral,
so $\gamma$ is well-defined on equivalence classes of admissible
inputs---the choice of representative conditional kernel is irrelevant.

We emphasize that $\gamma$ is \emph{not} claimed to be an operad
composition in the usual sense.  Two points clarify the status of
the algebraic structure:

\begin{enumerate}
\item \emph{Selection dependence is resolved by the compatibility
condition.}
Because every admissible input is the image of a disintegration,
the kernels $\mu_j(\cdot\mid t_j)$ are defined as
$\Phi_k$-a.e.\ unique regular conditional measures along the CDF
summaries.  There is no freedom to choose a different kernel: the
compatibility condition pins down a single equivalence class.  Thus
the composition $\gamma$ is independent of the choice of
representative.

\item \emph{Associativity and transitivity are not required for the
main results.}
The primary object of this paper is the renormalization operator
$R_{\mathcal{I}}=\gamma\circ\Delta_{\mathcal{I}}$ and the
quantification of its information loss via the probability curvature
$\mathcal{K}_{\mathcal{I}}(\mu)$.  Both are well-defined as soon as
$\gamma$ is defined on $\mathrm{im}(\Delta_{\mathcal{I}})$; neither
requires $\gamma$ to satisfy associativity or to be transitively
closed under iteration.
\end{enumerate}

In the unqualified case, $\gamma$ is therefore most accurately
described as a \emph{magma-valued partial composition}: it is a
partially defined binary operation on the admissible subclass, with
no associativity or unit axioms imposed.  Associativity fails in
general; whether it can hold under restricted conditions depends on
the compatibility between the base-space measure (the inter-block
coupling $\Phi_k$) and the skeleton measures of nested compositions,
and we do not currently have a definitive result in this direction.
\end{remark}

The signature checks:
\[
\gamma:\
\PP_k\bigotimes\PP_{m_1}\bigotimes\cdots\bigotimes\PP_{m_k}
\longrightarrow\PP_n,
\]
closed on bare measures, restricted to admissible inputs.

\section{The asymmetry: not inverses}

\begin{defn}[Renormalization operator]
For a fixed partition $\mathcal{I}$, define
\[
R_{\mathcal{I}} := \gamma \circ \Delta_{\mathcal{I}} : \PP_n \to \PP_n.
\]
$R_{\mathcal{I}}$ compresses $\mu$ into block summaries and
reconstructs under the conditional independence assumption.
It is idempotent iff $\mu\in\mathrm{im}(\gamma)$.
\end{defn}

\begin{prop}[Asymmetry of disintegration and combination]\label{prop:asym}
Let $\mathcal{I}$ be a partition of $\{1,\dots,n\}$ into $k$ blocks
with $k\ge 2$.
\begin{enumerate}
\item[(i)] \textbf{Combination then disintegration is the identity
on admissible inputs:}
\[
\Delta_{\mathcal{I}}\bigl(\gamma(\Phi_k;\mu_1,\dots,\mu_k)\bigr)
=
(\mu_1,\dots,\mu_k;\,\Phi_k)
\]
holds for every admissible input
$(\Phi_k;\mu_1,\dots,\mu_k)$.

\item[(ii)] \textbf{Disintegration then combination = renormalization:
$R_{\mathcal{I}}(\mu) \neq \mu$ in general:}
\[
\gamma\bigl(\Delta_{\mathcal{I}}(\mu)\bigr)
= R_{\mathcal{I}}(\mu)
=
\int_{\II^k}
\left(\bigotimes_{j=1}^k\mu_j(\cdot\mid t_j)\right)
d\Phi_k(t)
\neq\mu
\]
whenever $\mu\notin\mathrm{im}(\gamma)$.
The discrepancy quantifies the probability curvature
$\mathcal{K}_{\mathcal{I}}(\mu)$ (Definition~\ref{def:curvature}).
\end{enumerate}
\end{prop}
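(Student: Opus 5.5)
The plan for both parts is to read off the pushforwards of the combined measure directly from formula (OP), using the admissibility condition and Lemma~\ref{lem:disint} applied block-wise. The key observation for (ii) is that it follows formally from (i), so all the measure-theoretic work sits in (i).

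For (i), fix an admissible input $(\Phi_k;\mu_1,\dots,\mu_k)$ and set $\nu:=\gamma(\Phi_k;\mu_1,\dots,\mu_k)$. I will carry out three steps, in this order.

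First, I compute the block margins of $\nu$. Pushing (OP) forward under $\pi_{I_j}$, every factor $\mu_{j'}(\cdot\mid t_{j'})$ with $j'\neq j$ integrates to $1$. This leaves $\int_{\II^k}\mu_j(\cdot\mid t_j)\,d\Phi_k(t)$, which depends on $t$ only through $t_j$, so it equals $\int_{\II}\mu_j(\cdot\mid s)\,d\bigl((\pi_j)_*\Phi_k\bigr)(s)$. By admissibility, $(\pi_j)_*\Phi_k=\Psi_{j,*}\mu_j$. The block-wise disintegration of $\mu_j$ along $\Psi_j$ (the one-block case of Lemma~\ref{lem:disint}) then gives $(\pi_{I_j})_*\nu=\mu_j$.

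Second, I note that, since $\nu_j=\mu_j$, the CDF summaries $\Psi_j$ computed from $\nu$ coincide with those computed from the $\mu_j$. Hence the map $\Pi_{\mathcal{I}}$ used in $\Delta_{\mathcal{I}}(\nu)$ is the same map appearing implicitly in (OP). This is the point where the construction is genuinely self-consistent, and I will state it explicitly.

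Third, I compute $(\Pi_{\mathcal{I}})_*\nu$. Each kernel $\mu_j(\cdot\mid t_j)$ is concentrated on $\Psi_j^{-1}(t_j)$ for $\Psi_{j,*}\mu_j$-a.e.\ $t_j$. By the marginal agreement (Remark~\ref{rem:compatibility}), this holds for all $j$ simultaneously on a $\Phi_k$-full set. On that set the product kernel $\bigotimes_j\mu_j(\cdot\mid t_j)$ is concentrated on $\Pi_{\mathcal{I}}^{-1}(t)$, so its pushforward under $\Pi_{\mathcal{I}}$ is $\delta_t$. Integrating against $\Phi_k$ yields $(\Pi_{\mathcal{I}})_*\nu=\Phi_k$. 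Together the three steps give $\Delta_{\mathcal{I}}(\nu)=(\mu_1,\dots,\mu_k;\Phi_k)$.

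I expect the main obstacle to be this last step, specifically the null-set bookkeeping. One must check that the exceptional sets of the $k$ separate disintegrations combine into a single $\Phi_k$-null set. One must also check that $t\mapsto\bigotimes_j\mu_j(\cdot\mid t_j)$ is a measurable kernel, so that (OP) and its pushforwards make sense; this follows from measurability of each factor kernel via products of measurable rectangles. The exceptional-set argument runs exactly as in Remark~\ref{rem:nested-null}, with admissibility playing the role of the pushforward relation.

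For (ii), the displayed formula is just the definition of $R_{\mathcal{I}}$ unpacked through Definition~\ref{def:cooperad} and (OP). For the inequality I argue the contrapositive via (i). If $R_{\mathcal{I}}(\mu)=\mu$, then $\mu=\gamma(\Delta_{\mathcal{I}}(\mu))\in\mathrm{im}(\gamma)$. Conversely, if $\mu=\gamma(x)$ for an admissible $x$, then (i) gives $\Delta_{\mathcal{I}}(\mu)=x$, hence $R_{\mathcal{I}}(\mu)=\gamma(x)=\mu$. So $R_{\mathcal{I}}(\mu)\ne\mu$ exactly when $\mu\notin\mathrm{im}(\gamma)$; the same computation also shows that $R_{\mathcal{I}}$ is idempotent.

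To show that the "in general" clause is not vacuous, I would exhibit a discrete example with $n=3$, $I_1=\{1,2\}$, $I_2=\{3\}$. I would choose $\mu_1$ with two atoms $a\neq b$ lying on the same level set of $\Psi_1$, for instance incomparable points carrying equal CDF value. Then I would couple the third coordinate deterministically to which atom occurs. In this case $R_{\mathcal{I}}(\mu)$ replaces the coupling on that fiber by a product, so it differs from $\mu$. That discrepancy is what the curvature $\mathcal{K}_{\mathcal{I}}$ will later measure.
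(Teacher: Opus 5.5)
Your proof is correct. For (i) it follows the paper's outline, and for (ii) it takes a genuinely different route.

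\textbf{Part (i).} The paper only asserts that $\hat\mu(\cdot\mid t)=\bigotimes_j\mu_j(\cdot\mid t_j)$ ``by the definition of $\gamma$'' and that block margins and skeleton are ``unchanged.'' Your three steps are what justify those assertions. Your first step is the only place where admissibility $(\pi_j)_*\Phi_k=\Psi_{j,*}\mu_j$ is actually used. Your second step makes explicit that the summaries $\Psi_j$ recomputed from $\nu$ are the original ones, a point the paper passes over.

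\textbf{Part (ii).} The paper compares integrands: the true conditional $\mu(\cdot\mid t)$ against the conditionally independent product $\bigotimes_j\mu_j(\cdot\mid t_j)$. It argues they differ on a $\Phi_k$-positive set when $\mu$ has residual cross-block dependence. You instead derive $R_{\mathcal{I}}(\mu)=\mu\iff\mu\in\mathrm{im}(\gamma)$ formally from the retraction identity $\Delta_{\mathcal{I}}\circ\gamma=\mathrm{id}$ on admissible inputs.

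\textbf{What your route buys.} It is shorter and rigorous. It also gives idempotency of $R_{\mathcal{I}}$ for free. As a byproduct, every admissible input $x=\Delta_{\mathcal{I}}(\gamma(x))$ lies in $\mathrm{im}(\Delta_{\mathcal{I}})$, so the admissible class actually coincides with $\mathrm{im}(\Delta_{\mathcal{I}})$.

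\textbf{What the paper's route buys.} It locates the discrepancy fiberwise, which is the form that feeds the definition of $\mathcal{K}_{\mathcal{I}}$. However, passing from ``the integrands differ'' to ``$R_{\mathcal{I}}(\mu)\neq\mu$'' needs the uniqueness clause of Lemma~\ref{lem:disint}. That in turn requires knowing the product kernel is concentrated on the fibres $\Pi_{\mathcal{I}}^{-1}(t)$, which is exactly your third step. Combining that step with uniqueness would also give you the fiberwise characterization $R_{\mathcal{I}}(\mu)=\mu\iff\mu(\cdot\mid t)=\bigotimes_j\mu_j(\cdot\mid t_j)$ $\Phi_k$-a.e., if you want to connect to the curvature.

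\textbf{Your example.} The discrete example, with two incomparable atoms on a common level set of $\Psi_1$ coupled deterministically to the third coordinate, is something the paper does not supply. It shows the ``in general'' clause is not vacuous. One small correction to its description: the true conditional there is already a product ($\delta_a\otimes\delta_0$ at $t=(1/2,1/2)$). The loss comes from $\mu_1(\cdot\mid t_1)$ ignoring the information carried by $t_2$, not from replacing a non-product coupling on a fiber.
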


\begin{proof}
(i) Let $\hat\mu=\gamma(\Phi_k;\mu_1,\dots,\mu_k)$.
By the definition of $\gamma$, for $\Phi_k$-a.e.\ $t$ we have
$\hat\mu(\cdot\mid t)=\bigotimes_{j=1}^k\mu_j(\cdot\mid t_j)$.
Applying $\Delta_{\mathcal{I}}$ to $\hat\mu$ recovers the same
$(\mu_1,\dots,\mu_k;\,\Phi_k)$ because the block margins and the
CDF-summary pushforward are unchanged.

(ii) For arbitrary $\mu\in\PP_n$, Lemma~\ref{lem:disint} gives the
true conditional distribution $\mu(\cdot\mid t)$.
In contrast, $R_{\mathcal{I}}(\mu)=\gamma(\Delta_{\mathcal{I}}(\mu))$
uses $\bigotimes_j\mu_j(\cdot\mid t_j)$---the conditionally independent
product---as its integrand.
These two integrands differ whenever $\mu$ has residual cross-block
dependence beyond what $\Phi_k$ captures:
\[
\mu(\cdot\mid t)\neq\bigotimes_{j=1}^k\mu_j(\cdot\mid t_j),
\qquad\Phi_k\text{-a.e. on a positive-measure set.}
\]
Thus $R_{\mathcal{I}}(\mu)=\mu$ iff
$\mu\in\mathrm{im}(\gamma)$.
\end{proof}

\section{Probability curvature}

\begin{defn}[Probability curvature]\label{def:curvature}
For a partition $\mathcal{I}$ with $k$ blocks and any
$\mu\in\PP_n$, the \emph{probability curvature}
$\mathcal{K}_{\mathcal{I}}(\mu)$ is defined as the conditional
KL divergence:
\[
\mathcal{K}_{\mathcal{I}}(\mu)
:=
\bE_{t\sim\Phi_k}\Bigl[
D_{\mathrm{KL}}\Bigl(
\mu(\cdot\mid t)\,\Big\|\,
\bigotimes_{j=1}^k\mu_j(\cdot\mid t_j)
\Bigr)
\Bigr].
\]
It quantifies the obstruction to recovering $\mu$ from its DisCo
factorization: $\mathcal{K}_{\mathcal{I}}(\mu)=0$ if and only if
$\mu\in\mathrm{im}(\gamma)$.
In the geometric analogy, this quantity plays the role of a curvature
form---it measures the failure of conditional updating to be
path-independent across blocks.
\end{defn}

\section{Information-theoretic quantification}

The information loss of the renormalization operator is exactly
the probability curvature.

\begin{theorem}[KL decomposition]\label{thm:kl}
For any $\mu\in\PP_n$ and partition $\mathcal{I}$ with $k\ge 2$ blocks,
\[
D_{\mathrm{KL}}\bigl(\mu\,\|\,\mu_1\otimes\cdots\otimes\mu_k\bigr)
=
D_{\mathrm{KL}}\bigl(\Phi_k\,\|\,\Phi_k^{\mathrm{ind}}\bigr)
+
\mathcal{K}_{\mathcal{I}}(\mu),
\]
where $\Phi_k^{\mathrm{ind}}:=\bigotimes_{j=1}^k (\Psi_j)_*\mu_j$
is the product of block summary measures.

Equivalently,
\[
\mathcal{K}_{\mathcal{I}}(\mu)
=
D_{\mathrm{KL}}\bigl(\mu\,\|\,R_{\mathcal{I}}(\mu)\bigr)
\ge 0,
\]
with equality iff $\mu\in\mathrm{im}(\gamma)$.
\end{theorem}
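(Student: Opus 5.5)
The plan is to prove both identities with the chain rule for KL divergence along the map $\Pi_{\mathcal{I}}$, using the disintegration of Lemma~\ref{lem:disint} for every measure involved. If $\mu$ is not absolutely continuous with respect to the relevant reference measure, both sides should be $+\infty$; I would handle this case separately at the end. The main argument assumes all divergences are finite.

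\textbf{Step 1: disintegrate the reference measure $\nu:=\mu_1\otimes\cdots\otimes\mu_k$.} Each $\Psi_j$ depends only on block $j$, so $(\Pi_{\mathcal{I}})_*\nu=\bigotimes_j(\Psi_j)_*\mu_j=\Phi_k^{\mathrm{ind}}$. The conditional law of $\nu$ given $\Pi_{\mathcal{I}}=t$ is the product $\bigotimes_j\mu_j(\cdot\mid t_j)$. To see this, integrate this kernel against $\Phi_k^{\mathrm{ind}}$ and apply Fubini together with the blockwise disintegrations of the $\mu_j$; this returns $\nu$. The a.e.\ uniqueness in Lemma~\ref{lem:disint} then identifies the kernel.

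\textbf{Step 2: chain rule.} For two measures $\mu,\nu$ on a standard Borel space and a measurable map $\Pi$, with disintegrations $\mu(\cdot\mid t)$ and $\nu(\cdot\mid t)$,
\[
D_{\mathrm{KL}}(\mu\|\nu)=D_{\mathrm{KL}}(\Pi_*\mu\|\Pi_*\nu)+\bE_{t\sim\Pi_*\mu}\bigl[D_{\mathrm{KL}}(\mu(\cdot\mid t)\|\nu(\cdot\mid t))\bigr].
\]
The proof writes $d\mu/d\nu=\frac{d\Pi_*\mu}{d\Pi_*\nu}(\Pi(u))\cdot\frac{d\mu(\cdot\mid t)}{d\nu(\cdot\mid t)}(u)$. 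This factorization is checked by integrating against $\nu$ and using both disintegrations. Taking $\int\log(\cdot)\,d\mu$ then splits the divergence into the two terms. Substituting Step 1 gives the first identity, since the conditional term is exactly $\mathcal{K}_{\mathcal{I}}(\mu)$ by Definition~\ref{def:curvature}.

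\textbf{Step 3: the equivalent form.} Apply the same chain rule with reference measure $\nu':=R_{\mathcal{I}}(\mu)$. By construction (OP), $\nu'$ has pushforward $(\Pi_{\mathcal{I}})_*\nu'=\Phi_k$. Its conditional kernel is $\bigotimes_j\mu_j(\cdot\mid t_j)$, again by uniqueness in Lemma~\ref{lem:disint}. This uses that the product kernel is supported on the fiber $\Pi_{\mathcal{I}}^{-1}(t)$, because $\mu_j(\cdot\mid t_j)$ lives on $\Psi_j^{-1}(t_j)$. It is also the content of Proposition~\ref{prop:asym}(i). In the chain rule the marginal term becomes $D_{\mathrm{KL}}(\Phi_k\|\Phi_k)=0$, which leaves $D_{\mathrm{KL}}(\mu\|R_{\mathcal{I}}(\mu))=\mathcal{K}_{\mathcal{I}}(\mu)\ge0$.

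\textbf{Step 4: equality case.} Equality holds iff $D_{\mathrm{KL}}(\mu\|R_{\mathcal{I}}\mu)=0$, i.e.\ iff $\mu=R_{\mathcal{I}}(\mu)$. One direction is immediate, since $R_{\mathcal{I}}(\mu)\in\mathrm{im}(\gamma)$. For the converse, take $\mu=\gamma(\Phi;\nu_1,\dots,\nu_k)$. Proposition~\ref{prop:asym}(i) recovers $\Delta_{\mathcal{I}}(\mu)=(\nu_1,\dots,\nu_k;\Phi)$, so $R_{\mathcal{I}}(\mu)=\gamma(\Phi;\nu_\bullet)=\mu$.

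\textbf{Main obstacle.} I expect the hardest part to be the measure-theoretic care in Steps 1--3. The fibers $\Psi_j^{-1}(t_j)$ may be singular, and $(\Psi_j)_*\mu_j$ may have atoms, so it has to be verified that the product kernel really is a version of the conditional law, supported on the fiber and measurable in $t$. The remaining checks are the Radon--Nikodym factorization, including the null sets where $\mu$ charges $\nu$-null parts, and the $+\infty$ conventions. I would first settle the absolutely continuous case through densities and then extend using the standard definition of KL as a supremum over finite partitions.
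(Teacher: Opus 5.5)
Your proposal is correct, but there is no proof in the paper to compare it with. Theorem~\ref{thm:kl} is stated without proof. The only supporting text is the remark that reads the first term as a skeleton-level quantity and the second as a conditional residual. That is exactly the chain-rule split along $\Pi_{\mathcal{I}}$ you carry out, once with reference measure $\mu_1\otimes\cdots\otimes\mu_k$ and once with $R_{\mathcal{I}}(\mu)$. Proving the two displays separately is the right call. Despite the word ``Equivalently'', neither follows from the other without the extra identity $D_{\mathrm{KL}}(\mu\|\bigotimes_j\mu_j)=D_{\mathrm{KL}}(\Phi_k\|\Phi_k^{\mathrm{ind}})+D_{\mathrm{KL}}(\mu\|R_{\mathcal{I}}(\mu))$, which your two applications jointly produce.

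Three details are worth writing out, since the paper never does.

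\emph{Null sets in Step~1.} The product kernel is a version of the conditional law of $\nu$ only up to $\Phi_k^{\mathrm{ind}}$-null sets, whereas the chain rule integrates against $\Phi_k$. This is why your finite/infinite split is genuinely needed. The particular product version appearing in $\mathcal{K}_{\mathcal{I}}(\mu)$ is nonetheless $\Phi_k$-a.e.\ determined and fiber-supported, as Step~3 also requires. This follows from the marginal agreement in Remark~\ref{rem:compatibility}.

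\emph{The infinite case.} You do not need a detour through partition suprema. Suppose $\Phi_k\ll\Phi_k^{\mathrm{ind}}$ and $\mu(\cdot\mid t)\ll\nu(\cdot\mid t)$ for $\Phi_k$-a.e.\ $t$. Then every $\nu$-null set $A$ has $\nu(A\mid t)=0$ for $\Phi_k^{\mathrm{ind}}$-a.e., hence $\Phi_k$-a.e., $t$, so $\mu(A)=0$. Thus $\mu\not\ll\nu$ forces one right-hand term to be $+\infty$. The same argument with $\nu'=R_{\mathcal{I}}(\mu)$, whose skeleton is $\Phi_k$ itself, shows that $\mu\not\ll R_{\mathcal{I}}(\mu)$ forces $\mathcal{K}_{\mathcal{I}}(\mu)=+\infty$.

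\emph{Step~4.} It relies on Proposition~\ref{prop:asym}(i), whose printed proof merely asserts that the block margins are unchanged. Add the one-line check $(\pi_{I_j})_*\gamma(\Phi;\nu_\bullet)=\int\nu_j(\cdot\mid s)\,d\bigl((\pi_j)_*\Phi\bigr)(s)=\nu_j$. This is exactly where admissibility enters. It ensures that the block CDFs, and hence $\Pi_{\mathcal{I}}$, recomputed from $\gamma(\Phi;\nu_\bullet)$ are those of the $\nu_j$. Your Step~3 fiber-support argument then returns the skeleton $\Phi$.
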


\begin{remark}[Interpretation of the two terms]
The first term $D_{\mathrm{KL}}(\Phi_k\,\|\,\Phi_k^{\mathrm{ind}})$
measures dependence \emph{between} block summaries---i.e., how much
the skeleton $\Phi_k$ deviates from product independence.
It lives at the \emph{skeleton level}.

The second term is the \emph{curvature layer}
$\mathcal{K}_{\mathcal{I}}(\mu)$: conditional cross-block residual
given scalar summaries. It measures residual coordination across
blocks that cannot be captured by 1D CDF-based scalar summaries.
\end{remark}

\section{Why this is not a defect}

The asymmetry is not a flaw in the framework; it is a hardness result
about high-dimensional probability itself:

\begin{itemize}
\item \textbf{Combination is a lossless embedding on admissible inputs}:
you build from $(\mu_j,\Phi_k)$ and express exactly the dependence
that $\Phi_k$ can encode. No information is discarded because you
never attempted to represent more.
\item \textbf{Disintegration is a lossy compression}:
you take an arbitrary high-dimensional $\mu$ and force it into the
format $(\mu_j,\Phi_k)$. Whatever residual dependence cannot be
captured by scalar block summaries is irretrievably lost---this loss
is exactly the probability curvature $\mathcal{K}_{\mathcal{I}}(\mu)$.
\item \textbf{Algebraic structure is a magma-operad:}
combination is a magma-valued partial composition on the subclass
$\mathrm{im}(\Delta_{\mathcal{I}})$, with no unit element and no
associativity axiom required.  This restriction reflects that
composition of probability distributions is only well-defined when
the coupling measure and block summaries derive from a common
source. The compatible class is defined by marginal compatibility and
contains $\mathrm{im}(\Delta_{\mathcal{I}})$; we work on the latter
as the natural domain.
\item The image $\mathrm{im}(\gamma)\subsetneq\PP_n$ for any
non-trivial partition ($k\ge 2$ and at least one block has $|I_j|\ge 2$):
this is the class of distributions whose cross-block dependence is
exactly captured by the block-wise CDF summary $\Phi_k$.
\end{itemize}

Dependence is layered. Low-dimensional summaries transmit only one
layer; finer layers require finer interfaces. The DisCo format
$(\mu_j,\Phi_k)$ is a universal, parameter-family-independent
dimension reduction---but it has an intrinsic expressive ceiling for
\emph{CDF-based} scalar block summaries. You cannot fully describe
arbitrarily complex cross-block dependence using one scalar summary
per block; the ceiling is the probability curvature.

\section{Relation to copula theory and prior work}

Sklar's theorem \cite{sklar1959} is the finest partition ($|I_j|=1$ for all $j$):
each coordinate gets its own 1D marginal, and the copula encodes
all remaining dependence. DisCo generalizes this to arbitrary
partitions: blocks may group coordinates, the skeleton lives on
block-level summaries, and conditional fibers may themselves be
high-dimensional. The classical copula density formula
$f=\phi\cdot\prod f_j$ is the degenerate special case where blocks
are not further subdivided and conditional densities collapse to
marginal densities.

Prior categorical approaches include the Giry monad \cite{giry1982} and
Leinster's operad perspective on finite probability distributions
\cite{leinster2011,leinster2021}.
These deal with \emph{how a distribution is generated from simpler
distributions via convex combinations}, and use this algebraic structure
to \emph{characterize entropy}---a different question from
\emph{how a joint distribution decomposes into interfaces and
conditional parts}. DisCo algebra belongs to the latter school.

\section{Conclusion}

Joint probability is a primitive object---it is given, not constructed.
In DisCo algebra we do not derive it from simpler parts; rather, we use
the framework as a \textbf{structural microscope}: choose a partition,
disintegrate into block margins and a coupling skeleton, then combine.
This cycle of decomposition and reconstruction is not an attempt to
build the joint from scratch, but a controlled way to
\textbf{probe its internal structure}---much like using coordinate
charts to study a manifold. The obstruction to invertibility
(the probability curvature $\mathcal{K}_{\mathcal{I}}(\mu)$) is not a
flaw in the construction; it is a \textbf{structural invariant}
that reveals how much cross-block dependence the chosen partition
can and cannot capture. In this sense, DisCo algebra is a method for
extracting invariants of joint distributions through the lens of
compositional probability.

The framework is strictly a \textbf{magma-operad--cooperad} pair:
combination is a non-associative partial operation defined on the
image of disintegration, where a single compatibility condition
(marginal agreement between $\Phi_k$ and block summary measures)
ensures well-definedness and resolves the choice of conditional kernel
representatives. The composite
$R_{\mathcal{I}} = \text{Combine} \circ \text{Disintegrate}$
is a renormalization operator whose information loss is exactly the
\textbf{probability curvature} $\mathcal{K}_{\mathcal{I}}(\mu)$---a
quantitative measure of what CDF-based scalar block summaries cannot
capture. Compression is never the inverse of construction; high-dimensional
probability has irreducible hardness that no choice of CDF-based scalar
block summary can circumvent.

\bibliographystyle{plain}

\end{document}